\theoremstyle{plain}
\newtheorem{lemma}{Lemma}[section] 
\newtheorem{theorem}[lemma]{Theorem}
\newtheorem{corollary}[lemma]{Corollary}
\theoremstyle{definition}
\newtheorem{example}[lemma]{Example}
\newcommand{\Zset}{\mathbb Z}
\newcommand{\gr}{\operatorname{gr}}
\newcommand{\so}{\mathbf{s}}
\newcommand{\ra}{\mathbf{r}}
\newcommand{\V}{\mathcal V}
\newcommand{\POG}{\mathbf {POG}}
\newcommand{\LE}{\mathbf {LE_{fin}}}
\newcommand\sima{\sim}
\title[Fullness and weak faithfulness]{The functor $K_0^{\operatorname{gr}}$ is full and only weakly faithful}
\author{Lia Va\v s}
\address{Department of Mathematics, Saint Joseph's University, Philadelphia, PA 19131, USA}
\email{lvas@sju.edu}
\subjclass{16S88, 16E20, 19A49} 
\keywords{Graded Classification Conjecture, Leavitt path algebra, graded Grothendieck group, graph $C^*$-algebra}
\begin{document}

\begin{abstract}
The Graded Classification Conjecture states that the pointed $K_0^{\operatorname{gr}}$-group is a complete invariant of the Leavitt path algebras of finite graphs when these algebras are considered with their natural grading by $\mathbb Z.$ The strong version of this conjecture states that the functor $K_0^{\operatorname{gr}}$ is full and faithful when considered on the category of Leavitt path algebras of finite graphs and their graded homomorphisms modulo conjugations by invertible elements of the zero components. We show that the functor $K_0^{\operatorname{gr}}$ is full for the unital Leavitt path algebras of countable graphs and that it is faithful (modulo specified conjugations) only in a certain weaker sense.
\end{abstract}

\maketitle

\section{Introduction}

If $E$ is a directed graph and $K$ a field, the Leavitt path algebra $L_K(E)$ and its operator theory counterpart, the graph $C^*$-algebra $C^*(E),$ have been the subjects of a variety of important results as well as of some thought provoking conjectures. We focus on one of these conjectures, the Graded Classification Conjecture, and its stronger version.

The algebra $L_K(E)$ is naturally graded by the group of integers $\Zset.$ In the unital case (when $E$ has finitely many vertices), this grading induces an action of the infinite cyclic group $\Gamma=\langle t\rangle\cong \Zset$ on the set of the graded isomorphism classes of finitely generated graded projective $L_K(E)$-modules. This action makes the Grothendieck group formed using the finitely generated {\em graded} projective modules and their {\em graded} isomorphism classes into a pre-ordered $\Gamma$-group. Although the notation $K_0^{\operatorname{gr}}(L_K(E))$ has often been used for this group, we use $K_0^\Gamma(L_K(E))$ in order to emphasize that the Grothendieck group itself is not graded by $\Gamma$ but that $\Gamma$ acts on it. If $L_K(E)$ is unital, $[L_K(E)]$ is an order-unit of the group $K_0^\Gamma(L_K(E)).$ If $L_K(E)$ is not unital, $K_0^\Gamma(L_K(E))$ can be defined via an unitization of $L_K(E)$ and a certain generating interval can be considered instead of $[L_K(E)].$  

The Graded Classification Conjecture was formulated by Roozbeh Hazrat in \cite{Roozbeh_Annalen} (published in 2013, on arXiv since 2011). The conjecture was originally stated for row-finite graphs. However, without considering generating intervals, the conjecture is applicable only to the unital case.
The statement below matches a widely accepted formulation (e.g. in \cite{LPA_book} and in  \cite{Ara_Pardo_graded_K_classification}).

{\bf The Graded Classification Conjecture (GCC)} is stating that the following conditions are equivalent for finite graphs $E$ and $F$ and a field $K.$ 
\begin{enumerate}
\item The algebras $L_K(E)$ and $L_K(F)$ are isomorphic as unital $\Zset$-graded $K$-algebras. 
\item There is an order-preserving $\Gamma$-group isomorphism of $K_0^\Gamma(L_K(E))$ and $K_0^\Gamma(L_K(F))$ which maps the order-unit $[L_K(E)]$ to the order-unit $[L_K(F)].$ 
\end{enumerate}

Let $\LE$ stand for the category of Leavitt path algebras of finite graphs in which the graded homomorphisms are considered modulo conjugations by invertible elements of the zero components of the algebras. Let $\POG^u$ stand for the category of pre-ordered $\Gamma$-groups with order-units and $\Zset[\Gamma]$-module homomorphisms which preserve the order and the order-units. If we consider $K_0^\Gamma$ as a functor from $\LE$ to $\POG^u$, then {\bf the Strong Graded Classification Conjecture}, also formulated in \cite{Roozbeh_Annalen}, is stating that the functor $K_0^\Gamma$ is full and faithful.

In \cite{Roozbeh_Annalen}, the GCC is shown to hold for polycephalic graphs. These are finite graphs in which every vertex connects to a sink, a cycle without exits, or to a vertex emitting no other edges but finitely many loops  and the graph is such that when these loops as well as an edge of each cycle with no exits are removed, the resulting graph is a finite acyclic graph. In \cite{Ara_Pardo_graded_K_classification}, a weaker version of the GCC is shown to hold for finite graphs without sources or sinks. In \cite{Roozbeh_Lia_Ultramatricial}, the GCC, generalized to include the non-unital case, is shown to hold for countable graphs in which no cycle has an exit and in which every infinite path ends in a (finite or infinite) sink or in a cycle. By \cite{Eilers_et_al}, the generalized GCC holds for countable graphs for which whenever there is an edge from a vertex $v$ to a vertex $w,$ there are infinitely many edges from $v$ to $w$. 

If $\V^\Gamma(L_K(E))$ is the $\Gamma$-monoid of the graded isomorphism classes of finitely generated graded projective modules (or its non-unital version defined using an unitization of $L_K(E)$ in the case when $L_K(E)$ is non-unital), this monoid is {\em cancellative} by \cite[Corollary 5.8]{Ara_et_al_Steinberg}. Since we prominently use this result, we  illustrate why this cancellability holds while the standard (``nongraded'') monoid $\V(L_K(E))$ can be fatally non-cancellative. Consider the graph $\;\;\xymatrix{\bullet\ar@(ul,dl)_e\ar@(ur, dr)^f}\;\;$ and let $R$ stand for its Leavitt path algebra over any field $K.$
The map $(x, y)\mapsto xe+yf$ is a left $R$-module isomorphism of $R\oplus R$ and $R.$ The existence of such map implies that the relation 
\[[R]=[R]+[R]\]
holds in $\V(R)$ and forces cancellability to fail on $\V(R).$ However, in the graded case, this map is an isomorphism of $R(1)\oplus R(1),$ not $R\oplus R,$ and $R$  
where $R(1)$ is $R$ shifted by $1\in \Zset$ (more detail are reviewed in section \ref{subsection_graded_rings}). Thus, the relation \[[R]=t[R]+t[R]\]
holds in $\V^\Gamma(L_K(E)),$ producing no obstruction of cancellability. 
Cancellability enables one to go back and forth from an equality of positive elements at the $K_0^\Gamma$-level to a graded isomorphism on the algebra level (Lemma \ref{lemma_cancellativity}). Thus, a Leavitt path algebra shares this favorable property with a graded ultramatricial algebra over a graded field. In the analogy, the vertices correspond to some of the diagonal standard matrix units and the edges to some of the off-diagonal standard matrix units. This analogy is the underlying idea of our proof of Theorem \ref{theorem_fullness}, stating that 
\begin{center}
the functor $K_0^\Gamma$ is full for Leavitt path algebras of countable graphs with finitely many vertices.  
\end{center}
In particular, $K_0^\Gamma$ {\em is} full on $\LE.$ %
Theorem \ref{theorem_fullness} also states that
if a map on the $K_0^\Gamma$-level is 
{\em injective}, then one can obtain an {\em injective} graded algebra homomorphism on the algebra level. Thus, if condition (2) holds for two finite graphs $E$ and $F,$ then there are graded algebra embeddings
$L_K(E)\hookrightarrow L_K(F)$ and $L_K(F)\hookrightarrow L_K(E)$ which
are mutually inverse isomorphisms on the $K_0^\Gamma$-level (Corollary \ref{corollary_of_fullness}). 

In section \ref{section_faithfullness}, we show Theorem \ref{theorem_faithfulness} stating that $K_0^\Gamma$ is faithful for finite graphs in the following weaker sense: for graded homomorphisms $\phi,\psi: L_K(E)\to L_K(F),$ $K_0^\Gamma(\phi)=K_0^\Gamma(\psi)$
if and only if
\begin{enumerate}
\item[(LE)] there is a zero-degree invertible element $z\in L_K(F)$ such that $\phi(v)=z\psi(v)z^{-1}$ for every vertex $v$ of $E$.
\end{enumerate}
Example \ref{example_not_faithful} shows that one cannot find an element $z$ as in (LE) such that $\phi(e)=z\psi(e)z^{-1}$ also holds for all edges $e$ of $E$. By this example (and also by \cite[Example 6.7]{Ara_Pardo_graded_K_classification}), 
$K_0^\Gamma$ is not faithful on $\LE$ so the Strong GCC fails. However, if we say that two graded homomorphisms are {\em locally equal} if  (LE) holds for them, then the functor $K_0^\Gamma$ is full and faithful on the category obtained from $\LE$ by considering graded homomorphisms modulo local equality instead of modulo conjugations by invertible elements of the zero components.

Soon after submitting the present paper for publication, the author became aware of Arnone's work \cite{Arnone} which overlaps with the present results. The two sets of results, obtained independently and using different methods, appeared on arXiv almost simultaneously (the first version of \cite{Arnone} was submitted on June 14, 2022 and the first version of the present paper on June 13, 2022). We summarize the similarities and differences of the main results.  

The fullness result \cite[Theorem 6.1]{Arnone} is  comparable to Theorem \ref{theorem_fullness}. The algebras in Theorem \ref{theorem_fullness} are considered over a field while they are considered over a commutative unital ring in \cite[Theorem 6.1]{Arnone}. The graded algebra map obtained in 
\cite[Theorem 6.1]{Arnone} is also involution and diagonal  preserving. On the other hand, the graphs in \cite[Theorem 6.1]{Arnone} are assumed to be finite and they can have countably infinitely many edges in Theorem \ref{theorem_fullness}. The level of constructiveness (i.e. whether one can explicitly obtain an algebra homomorphism given a $K_0^\Gamma$-level map) of \cite[Theorem 6.1]{Arnone} is discussed at the end of \cite[Section 1]{Arnone} and we discuss this level for Theorem \ref{theorem_fullness} in section \ref{subsection_constructiveness}.

We also note that \cite[Corollary 3.5]{Arnone} is comparable to Theorem \ref{theorem_faithfulness}.   \cite[Corollary 3.5]{Arnone} lists a condition which is seemingly stronger than the condition (LE) but, by the arguments of the proofs of Theorem \ref{theorem_faithfulness} and \cite[Corollary 3.5]{Arnone}, it is equivalent to (LE).

\section{Prerequisites}
\label{section_prerequisites}

In sections \ref{subsection_preordered_groups} to \ref{subsection_graded_Grothendieck}, $\Gamma$ stands for arbitrary group and $\varepsilon$ is the identity of $\Gamma$. In the rest of the paper, $\Gamma$ stands for the infinite cyclic group $\langle t\rangle$ on one generator $t.$ 

\subsection{Pre-ordered \texorpdfstring{$\Gamma$}{TEXT}-monoids and \texorpdfstring{$\Gamma$}{TEXT}-groups}\label{subsection_preordered_groups}
If $M$ is an abelian monoid with a left action of a group $\Gamma$ which is compatible with the monoid operation, we say that $M$ is a {\em $\Gamma$-monoid}. If $G$ an abelian group with a left action of $\Gamma$ which agrees with the group operation, we say that $G$ is a {\em $\Gamma$-group}. Such action of $\Gamma$ uniquely determines a left $\Zset[\Gamma]$-module structure on $G,$ so $G$ is also a left $\Zset[\Gamma]$-module.  

Let $\geq$ be a reflexive and transitive relation (a pre-order) on a $\Gamma$-monoid $M$ ($\Gamma$-group $G$) such that $g_1\geq g_2$ implies $g_1 + h\geq g_2 + h$ and $\gamma g_1 \geq \gamma g_2$ for all $g_1, g_2, h$ in $M$ (in $G$) and $\gamma\in \Gamma.$ We say that such monoid $M$ is a {\em pre-ordered $\Gamma$-monoid} and that such a group $G$ is a {\em pre-ordered $\Gamma$-group}. 

If $G$ is a pre-ordered $\Gamma$-group, the $\Gamma$-monoid $G^+=\{x\in G\mid x\geq 0\}$ is the {\em positive cone} of $G.$ If $G$ and $H$ are pre-ordered $\Gamma$-groups, a $\Zset[\Gamma]$-module homomorphism $f\colon G\to H$ is {\em order-preserving} if $f(G^+)\subseteq H^+.$
An element $u$ of a pre-ordered $\Gamma$-group $G$ is an \emph{order-unit} if $u\in G^+$ and for any $x\in G$, there is a nonzero $a\in \Zset^+[\Gamma]$ such that $x\leq au.$ 
If $G$ and $H$ are pre-ordered $\Gamma$-groups with order-units $u$ and $v$ respectively, an order-preserving $\Zset[\Gamma]$-module homomorphism $f\colon G\to H$ is {\em order-unit-preserving} if $f(u)=v.$ One writes $f: (G, u)\to (H,v)$ in this case and says that $f$ is a homomorphism of pointed groups. 

We let $\POG_\Gamma$ denote the category whose objects are pre-ordered $\Gamma$-groups and whose morphisms are order-preserving $\Zset[\Gamma]$-homomorphisms, we let $\POG^u_\Gamma$ denote the category whose objects are pairs $(G, u)$ where $G$ is an object of $\POG_\Gamma$ and $u$ is an order-unit of $G$ and whose morphisms are morphisms of $\POG_\Gamma$ which are order-unit-preserving.

\subsection{Graded rings}\label{subsection_graded_rings}
A ring $R$ (not necessarily unital) is {\em graded} by a group $\Gamma$ if $R=\bigoplus_{\gamma\in\Gamma} R_\gamma$ for additive subgroups $R_\gamma$ and $R_\gamma R_\delta\subseteq R_{\gamma\delta}$ for all $\gamma,\delta\in\Gamma.$ The elements of the set $\bigcup_{\gamma\in\Gamma} R_\gamma$ are said to be {\em homogeneous}. A {\em graded} ring homomorphism $f:R\to S$ is a ring homomorphism such that $f(R_\gamma)\subseteq S_\gamma$ for $\gamma\in \Gamma.$ We write $\cong_{\gr}$ for a graded ring isomorphism. 
A unital and commutative graded ring $R$ is a \emph{graded field} if every nonzero homogeneous element has a multiplicative inverse. 

We use the standard definitions of graded right and left $R$-modules, graded module homomorphisms and we use $\cong_{\gr}$ for a graded module isomorphism. If $M$ is a graded right $R$-module and $\gamma\in\Gamma,$ the $\gamma$-\emph{shifted} graded right $R$-module $(\gamma)M$ is defined as the module $M$ with the $\Gamma$-grading given by $(\gamma)M_\delta = M_{\gamma\delta}$ for all $\delta\in \Gamma.$ If $N$ is a graded left module, the $\gamma$-shift of $N$ is the graded module $N$ with the $\Gamma$-grading given by $M(\gamma)_\delta = M_{\delta\gamma}$ for all $\delta\in \Gamma.$  

Any finitely generated graded free right $R$-module has the form $(\gamma_1)R\oplus\ldots\oplus (\gamma_n)R$ and any finitely generated graded free left $R$-module has the form
$R(\gamma_1)\oplus\ldots\oplus R(\gamma_n)$ for $\gamma_1, \ldots,\gamma_n\in\Gamma$ (\cite[Section 1.2.4]{Roozbeh_book} contains more details). A finitely generated graded projective module is a direct summand of a finitely generated graded free module.   

Analogously to the non-homogeneous idempotents, two homogeneous idempotents $p$ and $q$ of a graded ring $R$ are {\em orthogonal} if $pq=qp=0.$ In this case, $p+q$ is a homogeneous  idempotent. The relation $\leq,$ given by 
$p\leq q$ if $pq=qp=p$ for homogeneous idempotents $p$ and $q,$ is such that $p\leq q$ implies that $q-p$ is also a homogeneous idempotent and $p$ and $q-p$ are orthogonal.

\subsection{Graded algebraic equivalence}
\label{subsection_sima_and_sims}
Two idempotents $p,q$ of any ring $R$ are {\em algebraically equivalent}, written as $p\sima_R q$ (or simply $p\sima q$ if it is clear from context) if there are $x,y\in R$ such that $xy=p$ and $yx=q.$ In this case, we say that $x$ and $y$ {\em realize} the equivalence $p\sima q.$ By replacing $x$ with $pxq$ and $y$ with $qyp,$ we can assume that $x\in pRq$ and $y\in qRp$ so that the left multiplication with $x$ is an isomorphism $qR\cong pR$ with the left multiplication by $y$ as its inverse. Conversely, if $\phi: qR\cong pR$ for some idempotents $p,q\in R,$  then $\phi(q)=x$ and $\phi^{-1}(p)=y$ realize the relation $p\sima q.$ These claims generalize to graded rings as in the lemma below.  

\begin{lemma} \cite[Lemma 2.2]{Lia_graded_UR}
Let $R$ be a $\Gamma$-graded ring and $p,q$ homogeneous idempotents of $R$. The following conditions are equivalent. 
\begin{enumerate}[\upshape(1)]
\item $pR\cong_{\gr} (\gamma^{-1})qR$ for some $\gamma\in\Gamma.$
 
\item $Rp\cong_{\gr} Rq(\gamma)$ for some $\gamma\in\Gamma.$

\item There are $x\in R_{\gamma}$ and $y\in R_{\gamma^{-1}}$ such that $xy=p$ and $yx=q.$ 

\item There are $x\in pR_{\gamma}q$ and $y\in qR_{\gamma^{-1}}p$ such that $xy=p$ and $yx=q.$
\end{enumerate}
\label{lemma_graded_equivalence}
\end{lemma}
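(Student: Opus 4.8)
The plan is to first record the one structural fact that drives the whole argument: a homogeneous idempotent necessarily lies in the identity component. Indeed, if $p=p^2$ is homogeneous of degree $\delta$, then $p^2$ is homogeneous of degree $\delta^2$, so $\delta^2=\delta$ and hence $\delta=\varepsilon$ in the group $\Gamma$. Thus $p,q\in R_\varepsilon$, and it is this observation that pins down the degrees of the various generators and compressions appearing below. With it in hand I would treat condition (3) as a hub and prove the equivalences $(3)\Leftrightarrow(4)$, $(3)\Leftrightarrow(1)$, and $(3)\Leftrightarrow(2)$.

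The implication $(4)\Rightarrow(3)$ is immediate, as $(4)$ is a special case of $(3)$. For $(3)\Rightarrow(4)$ I would use the compression trick: given $x\in R_\gamma$ and $y\in R_{\gamma^{-1}}$ realizing $xy=p$ and $yx=q$, replace them by $x'=pxq$ and $y'=qyp$. From $xy=p$ and $yx=q$ one first deduces the identities $px=xq$ and $qy=yp$, and these give $x'y'=p$ and $y'x'=q$; since $p,q\in R_\varepsilon$, the degrees are unaffected, so $x'\in pR_\gamma q$ and $y'\in qR_{\gamma^{-1}}p$, as required.

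For $(3)\Leftrightarrow(1)$ the engine is the interchangeability of realizing elements and graded module maps via left multiplication. Given $x,y$ as in $(3)$, left multiplication by $y$ carries $pR$ into $qR$ (because $yp=qy$) and sends a homogeneous element of degree $\delta$ to one of degree $\gamma^{-1}\delta$, so it is a graded homomorphism $pR\to(\gamma^{-1})qR$; left multiplication by $x$ is its two-sided inverse, yielding $(1)$. Conversely, given a graded isomorphism $\phi\colon pR\to(\gamma^{-1})qR$, I would set $y=\phi(p)$ and $x=\phi^{-1}(q)$. Since $p\in(pR)_\varepsilon$ and $q$ lives in degree $\gamma$ of the shifted module, evaluation places $y$ in $qR_{\gamma^{-1}}$ and $x$ in $pR_\gamma$; the relations $xy=p$ and $yx=q$ then read off from $\phi\circ\phi^{-1}=\mathrm{id}$ and $\phi^{-1}\circ\phi=\mathrm{id}$ together with $R$-linearity and $px=x$, $qy=y$. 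The equivalence $(3)\Leftrightarrow(2)$ is the left-module mirror image: right multiplication by $x$ (respectively $y$) carries a homogeneous element of degree $\delta$ to one of degree $\delta\gamma$ (respectively $\delta\gamma^{-1}$), giving a graded isomorphism $Rp\cong_{\gr}Rq(\gamma)$, with the shift now applied on the right in accordance with the convention for left modules, and the reverse implication is obtained by evaluating such an isomorphism at $p$ and its inverse at $q$.

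I expect the only genuine obstacle to be the bookkeeping of the grading shifts, namely keeping straight where $\gamma$ versus $\gamma^{-1}$ appears and confirming that evaluating an isomorphism at $p$ or $q$ lands in precisely the claimed homogeneous component. This is exactly where the observation $p,q\in R_\varepsilon$ does the heavy lifting, since it fixes the degrees of the generators and forces the shift computations in $(1)\Leftrightarrow(3)$ and $(2)\Leftrightarrow(3)$ to come out consistently.
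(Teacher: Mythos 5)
Your proof is correct and follows essentially the same route the paper indicates: the lemma is quoted from \cite{Lia_graded_UR}, and the paragraph preceding it sketches exactly your argument (compression $x\mapsto pxq$, $y\mapsto qyp$, left/right multiplication giving the graded module isomorphisms, and evaluation of $\phi$ and $\phi^{-1}$ at the idempotents for the converse). Your explicit observation that homogeneous idempotents lie in $R_\varepsilon$ and your bookkeeping of the shifts are consistent with the paper's conventions for $(\gamma)M$ and $N(\gamma)$.
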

If $p$ and $q$ satisfy the conditions in the above lemma, we say that $p$ and $q$ are {\em graded algebraically equivalent} and write $p\sima_{\gr}q$. If $\gamma=\varepsilon,$ the relation $p\sima_{\gr}q$ is $p\sima_{R_\varepsilon} q.$ 

\subsection{The graded Grothendieck group}
\label{subsection_graded_Grothendieck}
If $R$ is a unital $\Gamma$-graded ring, let $\V^{\Gamma}(R)$ denote the monoid of the graded isomorphism classes $[P]$ of finitely generated graded projective right $R$-modules $P$ with the addition given by $[P]+[Q]=[P\oplus Q]$ and the left $\Gamma$-action given by $(\gamma, [P])\mapsto [(\gamma^{-1})P].$ The monoid $\V^\Gamma(R)$ can be represented using the classes of left modules in which case the $\Gamma$-action is $(\gamma, [P])\mapsto [P(\gamma)].$ The two representations are equivalent (see \cite[Section 1.2.3]{Roozbeh_book}). The group $K_0^{\Gamma}(R)$ can also be defined via homogeneous matrices and \cite[section 3.2]{Roozbeh_book} has more details. Although we focus on the case when $\Gamma$ is $\Zset,$ we note that the definitions and results of \cite[Section 3.2]{Roozbeh_book} carry to the case when $\Gamma$ is not necessarily abelian by \cite[Section 1.3]{Lia_realization}.
The \emph{Grothendieck $\Gamma$-group}  $K_0^{\Gamma}(R)$ is the group completion of the $\Gamma$-monoid $\V^{\Gamma}(R)$ with 
the action of $\Gamma$ inherited from $\V^{\Gamma}(R)$. If $\Gamma$ is the trivial group, $K_0^{\Gamma}(R)$ is the usual $K_0$-group.

The image of the $\Gamma$-monoid $\V^{\Gamma}(R)$ of a $\Gamma$-graded unital ring $R$ under the natural map $\V^{\Gamma}(R)\to K_0^{\Gamma}(R)$ is a positive cone making $K_0^{\Gamma}(R)$ into an object of $\POG_\Gamma.$ Moreover, $(K_0^{\Gamma}(R), [R])$ is an object of $\POG^u_\Gamma.$ If $\phi$ is a graded ring homomorphism, then $K_0^\Gamma(\phi)$ is a morphism of $\POG_\Gamma.$ If $\phi$ is unital (i.e. $\phi$ maps the identity onto identity), then $K_0^{\Gamma}(\phi)$ is a morphism of $\POG^u_\Gamma.$

\subsection{Graphs and Leavitt path algebras}\label{subsection_LPAs}
If $E$ is a directed graph, we let $E^0$ denote the set of vertices, $E^1$ denote the set of edges, and $\so$ and $\ra$ denote the source and the range maps of $E.$ A {\em sink} of $E$ is a vertex which emits no edges and an {\em infinite emitter} is a vertex which emits infinitely many edges. A vertex of $E$ is {\em regular} if it is not a sink nor an infinite emitter. The graph $E$ is {\em row-finite} if it has no infinite emitters, $E$ is {\em finite} if it has finitely many vertices and edges, and it is {\em countable} if it has countably many vertices and edges.  

A {\em path} is a single vertex or a sequence of edges $e_1e_2\ldots e_n$ for some positive integer $n$ such that $\ra(e_i)=\so(e_{i+1})$ for $i=1,\ldots, n-1.$ The functions $\so$ and $\ra$ extend to paths naturally. 

If $K$ is any field, the \emph{Leavitt path algebra} $L_K(E)$ of $E$ over $K$ is a free $K$-algebra generated by the set  $E^0\cup E^1\cup\{e^\ast\mid e\in E^1\}$ such that for all vertices $v,w$ and edges $e,f,$

\begin{tabular}{ll}
(V)  $vw =0$ if $v\neq w$ and $vv=v,$ & (E1)  $\so(e)e=e\ra(e)=e,$\\
(E2) $\ra(e)e^\ast=e^\ast\so(e)=e^\ast,$ & (CK1) $e^\ast f=0$ if $e\neq f$ and $e^\ast e=\ra(e),$\\
(CK2) $v=\sum_{e\in \so^{-1}(v)} ee^\ast$ for each regular vertex $v.$ &\\
\end{tabular}

By the first four axioms, each element of $L_K(E)$ is a sum of the form $\sum_{i=1}^n k_ip_iq_i^\ast$ for some $n$, paths $p_i$ and $q_i$, and elements $k_i\in K,$ for $i=1,\ldots,n$ where $v^*=v$ for $v\in E^0$ and $p^*=e_n^*\ldots e_1^*$ for a path $p=e_1\ldots e_n.$  The algebra $L_K(E)$ is unital if and only if $E^0$ is finite in which case the sum of all vertices is the identity. If we consider $K$ to be trivially graded by $\Zset,$ $L_K(E)$ is graded by $\Zset$ so that the $n$-component $L_K(E)_n$ is the $K$-linear span of the elements $pq^\ast$ for paths $p, q$ with $|p|-|q|=n$ where $|p|$ denotes the length of a path $p.$ 
By \cite[Proposition 2.1.14 and Corollary 2.1.16]{LPA_book}, the zero component $L_K(E)_0$ is isomorphic to a direct limit of matricial algebras. If $E$ is row-finite, the direct limit can be taken over $\Zset^+$ so that $L_K(E)_0$ is isomorphic to an ultramatricial algebra and, if $E$ is finite, the connecting maps are unital. 

If $R$ is a $K$-algebra which contains the elements $p_v$ for $v\in E^0,$ and $x_e$ and $y_e$ for $e\in E^1$ such that the five axioms hold for these elements, the set of these elements forms an {\em $E$-family}. In this case, the Universal Property of $L_K(E)$ states that there is a unique algebra homomorphism $\phi:L_K(E)\to R$ such that $\phi(v)=p_v, \phi(e)=x_e,$ and $\phi(e^*)=y_e$ (see \cite[Remark 1.2.5]{LPA_book}). By the Graded Uniqueness Theorem (\cite[Theorem 2.2.15]{LPA_book}), $\phi$ is injective if $p_v\neq 0$ for $v\in E^0.$ If $R$ is $\Zset$-graded and $p_v\in R_0$ for $v\in E^0,$ $x_e\in R_1$ and $y_e\in R_{-1}$ for $e\in E^1,$ then $\phi$ is graded.

\subsection{The Grothendieck \texorpdfstring{$\Gamma$}{TEXT}-group of a graph}\label{subsection_graph_group}

If $E$ is a graph and $\Gamma$ is a group, the authors of \cite{Ara_et_al_Steinberg} construct a commutative monoid $M^\Gamma_E$ which is isomorphic to $\V^{\Gamma}(L_K(E))$ as a $\Gamma$-monoid. The authors of \cite{Roozbeh_Lia_comparability} provide an alternative construction of $M^\Gamma_E$ which we briefly review. 

Recall that we fixed $\Gamma=\langle t\rangle$ to be the infinite cyclic group on $t$ from after section \ref{subsection_graded_Grothendieck} on. The {\em graph $\Gamma$-monoid $M_E^\Gamma$} (also called the talented monoid) is the free abelian $\Gamma$-monoid on generators $[v]$ for $v\in E^0$ and $[v-\sum_{e\in Z}ee^*]$ for infinite emitters $v$ and nonempty and finite sets $Z\subseteq \so^{-1}(v)$ subject to the relations
\[[v]=\sum_{e\in \so^{-1}(v)}t[\ra(e)],\hskip.4cm [v]=[v-\sum_{e\in Z}ee^*]+\sum_{e\in Z}t[\ra(e)],\,\mbox{ and }\;[v-\sum_{e\in Z}ee^*]=[v-\sum_{e\in W}ee^*]+\sum_{e\in W-Z}t[\ra(e)]\]
where $v$ is a vertex which is regular for the first relation and an infinite emitter for the second two relations in which $Z\subsetneq W$ are finite and nonempty subsets of $\so^{-1}(v).$ The power 1 of $t$ in the formulas above signifies the fact that $e$ is a path of unit length. Consequently, if $p$ is a path of length $n$, the relation $[\so(p)]=t^n[\ra(p)]+a$ holds in $M_E^\Gamma$ for some $a\in M_E^\Gamma.$ Thus, the ``talent'' of this monoid is to register the lengths of paths between vertices while the standard monoid $\V(L_K(E))$ can only register whether two vertices are connected or not.  

The monoids $M^\Gamma_E$ and $\V^{\Gamma}(L_K(E))$ are isomorphic as pre-ordered $\Gamma$-monoids. The group completion $G^\Gamma_E$ of $M^\Gamma_E$ is a pre-ordered $\Gamma$-group. If $L_K(E)$ is unital, there is a $\POG_\Gamma^u$-isomorphism of 
$(K^\Gamma_0(L_K(E)), [L_K(E)])$ and $(G_E^\Gamma, [1_E])$ where $[1_E]$ denotes the sum $\sum_{v\in E^0}[v].$

\section{Fullness}\label{section_fullness}

In the rest of the paper, $K$ is any field. We also recall that $\Gamma=\langle t\rangle\cong\Zset$ is the infinite cyclic group generated by $t.$ After a lemma which follows from \cite[Corollary 5.8]{Ara_et_al_Steinberg}, we formulate and prove the fullness result, Theorem \ref{theorem_fullness}. 

\begin{lemma}
Let $E$ be an arbitrary graph. If $P$ and $Q$ are two finitely generated graded projective modules, the relation $[P]=[Q]$ holds in $K_0^\Gamma(L_K(E))$ if and only if $P\cong_{\gr}Q.$ 
\label{lemma_cancellativity}
\end{lemma}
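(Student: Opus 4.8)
The plan is to translate the equality $[P]=[Q]$ in the group completion $K_0^\Gamma(L_K(E))$ into an equality already at the level of the monoid $\V^\Gamma(L_K(E))$, and then to invoke the cancellativity of this monoid to upgrade a stable graded isomorphism into an honest one. The statement is precisely the assertion that the natural map $\V^\Gamma(L_K(E))\to K_0^\Gamma(L_K(E))$ is injective, i.e. that distinct elements of the monoid have distinct images in the group completion, which for a commutative monoid is equivalent to cancellativity.

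First I would recall the standard description of the group completion: for a commutative monoid $M$, one has $[a]=[b]$ in the Grothendieck group if and only if there exists $c\in M$ with $a+c=b+c$ in $M$. Applying this with $M=\V^\Gamma(L_K(E))$, the hypothesis $[P]=[Q]$ yields a finitely generated graded projective module $T$ with
\[
[P]+[T]=[Q]+[T]\quad\text{in }\V^\Gamma(L_K(E)),
\]
which by definition of the monoid operation means $P\oplus T\cong_{\gr} Q\oplus T$.

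The central step is then to cancel the summand $T$. By \cite[Corollary 5.8]{Ara_et_al_Steinberg}, the $\Gamma$-monoid $\V^\Gamma(L_K(E))$ is cancellative. Equivalently, via the isomorphism $\V^\Gamma(L_K(E))\cong M_E^\Gamma$ reviewed in section~\ref{subsection_graph_group}, cancellativity of the talented monoid $M_E^\Gamma$ may be quoted. Applying cancellation to the relation $[P]+[T]=[Q]+[T]$ gives $[P]=[Q]$ in the monoid itself, and unwinding the definition of $[\,\cdot\,]$ as a graded isomorphism class yields $P\cong_{\gr} Q$. The converse direction is immediate, since a graded isomorphism $P\cong_{\gr} Q$ gives $[P]=[Q]$ in $\V^\Gamma(L_K(E))$ and hence in its image in $K_0^\Gamma(L_K(E))$.

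The only genuine content, and the single step on which everything rests, is the cancellativity of $\V^\Gamma(L_K(E))$; this is exactly where the graded setting is essential, since as the introduction illustrates the ungraded monoid $\V(L_K(E))$ can fail cancellativity catastrophically. Since that cancellativity is an external input I am entitled to assume, the proof itself presents no real obstacle beyond carefully matching the monoid operation and the $\Gamma$-action in $\V^\Gamma(L_K(E))$ with direct sum and shift of graded modules, so that the equalities in the monoid are correctly read back as graded isomorphisms.
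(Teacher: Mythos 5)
Your proposal is correct and follows essentially the same route as the paper: both arguments reduce the equality in $K_0^\Gamma(L_K(E))$ to an equality in $\V^\Gamma(L_K(E))$ via the cancellativity result of \cite[Corollary 5.8]{Ara_et_al_Steinberg}, which guarantees that the natural map from the monoid to its group completion is injective. Your version merely makes explicit the standard characterization of equality in a group completion ($a+c=b+c$ for some $c$) before cancelling, which the paper leaves implicit.
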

\begin{proof}
By \cite[Corollary 5.8]{Ara_et_al_Steinberg}, the monoid $\V^\Gamma(L_K(E))$ is cancellative.
This implies that $V^\Gamma(L_K(E))=K_0^\Gamma(L_K(E))^+.$ So, the relation $[P]=[Q]$ holds in $K_0^\Gamma(L_K(E))$ if and only if the same relation holds in $\V^\Gamma(L_K(E)).$ By the definition of the monoid  $\V^\Gamma(L_K(E)),$ this implies that the modules $P$ and $Q$ are in the same graded isomorphism class. 
\end{proof}

\begin{theorem} (Fullness)  
Let $E$ and $F$ be graphs with finitely many vertices and let $E$ have countably many edges. For a morphism $f: (K_0^\Gamma(L_K(E)), [L_K(E)])\to (K_0^\Gamma(L_K(F)), [L_K(F)])$ of the category $\POG_\Gamma^u,$ there is a unital graded algebra homomorphism $\phi: L_K(E)\to L_K(F)$ such that $K_0^\Gamma(\phi)=f.$ %
Moreover, if $f$ is injective, then such a map $\phi$ can be found so that it is injective. 
\label{theorem_fullness} 
\end{theorem}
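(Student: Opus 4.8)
The plan is to produce $\phi$ from the Universal Property of $L_K(E)$ by exhibiting an $E$-family $\{p_v,x_e,y_e\}$ inside $L_K(F)$, so that the entire argument reduces to manufacturing, for each vertex $v$, a degree-zero idempotent $p_v$, and for each edge $e$, elements $x_e\in L_K(F)_1$ and $y_e\in L_K(F)_{-1}$ satisfying (V), (E1), (E2), (CK1), (CK2), while recording $f$ at the $K_0^\Gamma$-level. First I would build the vertex idempotents. Since $f$ preserves the order-unit, $\sum_{v\in E^0}f([v])=f([L_K(E)])=[L_K(F)]$; writing each positive element $f([v])$ as the class of a finitely generated graded projective right module $P_v$ (possible because, by cancellativity, the positive cone equals $\V^\Gamma(L_K(F))$), Lemma \ref{lemma_cancellativity} upgrades this equality to a graded isomorphism $\bigoplus_v P_v\cong_{\gr}L_K(F)$. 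The resulting internal decomposition of the regular module yields orthogonal degree-zero idempotents $p_v$ with $\sum_v p_v=1_F$ and $[p_vL_K(F)]=f([v])$, which secures (V) and unitality simultaneously.

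Next I would treat the edges out of a \emph{regular} vertex $v$, where $\so^{-1}(v)=\{e_1,\dots,e_m\}$ is finite. Applying $f$ to the relation $[v]=\sum_i t[\ra(e_i)]$ of $M_E^\Gamma$ gives $[p_vL_K(F)]=\sum_i t[p_{\ra(e_i)}L_K(F)]$, which Lemma \ref{lemma_cancellativity} realizes as a graded isomorphism $p_vL_K(F)\cong_{\gr}\bigoplus_i (t^{-1})p_{\ra(e_i)}L_K(F)$. The associated internal decomposition supplies orthogonal idempotents $q_{e_i}\le p_v$ with $\sum_i q_{e_i}=p_v$ and $q_{e_i}L_K(F)\cong_{\gr}(t^{-1})p_{\ra(e_i)}L_K(F)$, and the equivalence of (1) and (4) in Lemma \ref{lemma_graded_equivalence} then produces $x_{e_i}\in q_{e_i}L_K(F)_1 p_{\ra(e_i)}$ and $y_{e_i}\in p_{\ra(e_i)}L_K(F)_{-1}q_{e_i}$ with $x_{e_i}y_{e_i}=q_{e_i}$ and $y_{e_i}x_{e_i}=p_{\ra(e_i)}$. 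Then $\sum_i x_{e_i}y_{e_i}=p_v$ is exactly (CK2), and the placement of degrees yields (E1) and (E2).

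The main obstacle is the \emph{infinite emitter} case, and this is where the countability of $E^1$ enters. There is no (CK2) constraint, but I still must produce an infinite orthogonal family $\{q_e\}_{e\in\so^{-1}(v)}$ below $p_v$ together with partial isometries. I would enumerate $\so^{-1}(v)=\{e_1,e_2,\dots\}$ and build the family by induction, peeling off one summand at a time with invariant: orthogonal $q_{e_1},\dots,q_{e_n}\le p_v$ whose complement $r_n=p_v-\sum_{i\le n}q_{e_i}$ satisfies $[r_nL_K(F)]=f\big([v-\sum_{i\le n}e_ie_i^\ast]\big)$, the right-hand class being a genuine module class by positivity and cancellativity. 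Applying $f$ to the relation $[v-\sum_{i\le n}e_ie_i^\ast]=[v-\sum_{i\le n+1}e_ie_i^\ast]+t[\ra(e_{n+1})]$ and splitting $r_nL_K(F)$ via Lemma \ref{lemma_cancellativity} extracts a summand $q_{e_{n+1}}L_K(F)\cong_{\gr}(t^{-1})p_{\ra(e_{n+1})}L_K(F)$ with $q_{e_{n+1}}\le r_n$, hence orthogonal to all earlier $q_{e_i}$, and the matching $x_{e_{n+1}},y_{e_{n+1}}$ come again from Lemma \ref{lemma_graded_equivalence}. No convergence of $\sum q_{e_i}$ is required, precisely because infinite emitters carry no (CK2) relation; I need only the countable orthogonal family and its partial isometries. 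Orthogonality across distinct sources is automatic from $q_e\le p_{\so(e)}$ and orthogonality of the $p_v$, which delivers (CK1) in the form $y_ex_f=y_e(q_eq_f)x_f=0$ for $e\ne f$ and $y_ex_e=p_{\ra(e)}$.

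Finally I would assemble the pieces. The family $\{p_v,x_e,y_e\}$ satisfies all five axioms, and the degree placement ($p_v$ in degree $0$, $x_e$ in degree $1$, $y_e$ in degree $-1$) makes the induced map graded, so the Universal Property yields a graded homomorphism $\phi\colon L_K(E)\to L_K(F)$, unital since $\sum_v p_v=1_F$. To verify $K_0^\Gamma(\phi)=f$, I note $K_0^\Gamma(\phi)([v])=[\phi(v)L_K(F)]=[p_vL_K(F)]=f([v])$; since the relations of $M_E^\Gamma$ express each generator $[v-\sum_Z ee^\ast]$ as a $\Zset[\Gamma]$-combination of the $[v]$'s, the classes $\{[v]\}_{v\in E^0}$ generate $K_0^\Gamma(L_K(E))$ as a $\Zset[\Gamma]$-module, so two $\Zset[\Gamma]$-homomorphisms agreeing on them coincide. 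For the last assertion, if $f$ is injective then $f([v])\ne 0$, hence every $p_v\ne 0$, and the Graded Uniqueness Theorem forces $\phi$ to be injective.
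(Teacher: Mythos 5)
Your proposal is correct and follows essentially the same route as the paper's proof: build the $E$-family $\{p_v,x_e,y_e\}$ in $L_K(F)_0$, $L_K(F)_1$, $L_K(F)_{-1}$ via Lemma \ref{lemma_cancellativity} and Lemma \ref{lemma_graded_equivalence}, handle infinite emitters by the same inductive peeling of summands, and finish with the Universal Property and the Graded Uniqueness Theorem. The only (harmless) cosmetic differences are that you merge the definition of the edge idempotents with the construction of $x_e,y_e$, and you verify $K_0^\Gamma(\phi)=f$ by noting the vertex classes generate $K_0^\Gamma(L_K(E))$ as a $\Zset[\Gamma]$-module rather than checking the infinite-emitter generators directly.
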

\begin{proof}
For a given $\POG_\Gamma^u$-morphism $f: (K_0^\Gamma(L_K(E)), [L_K(E)])\to (K_0^\Gamma(L_K(F)), [L_K(F)]),$ we define the elements $p_v\in L_K(F)_0$ for $v\in E^0$ and  
$p_e\in L_K(F)_0,$ $x_e\in L_K(F)_1,$ and $y_e\in L_K(F)_{-1}$ for $e\in E^1$ such that the elements $p_v, x_e, y_e$ form an $E$-family. 

{\bf Defining $\mathbf{p_v}$ for $\mathbf{v\in E^0}$.}
Representing $K_0^\Gamma(L_K(E))$ via the finitely generated graded projective right modules, we have that $[L_K(E)]=[\bigoplus_{v\in E^0} vL_K(E)]=\sum_{v\in E^0}[vL_K(E)].$ So, \[[L_K(F)]=f([L_K(E)])=\sum_{v\in E^0}[P_v]=[\bigoplus_{v\in E^0}P_v]\] for some finitely generated graded projective right $L_K(F)$-modules $P_v$ such that $[P_v]=f([vL_K(E)]).$ Without any additional assumptions on $f$, $P_v$ is possibly trivial for some $v\in E^0.$ By Lemma \ref{lemma_cancellativity}, there is a graded isomorphism $L_K(F)\cong_{\gr}\bigoplus_{v\in E^0}P_v.$ Let $Q_v\leq L_K(F)$ be a graded direct summand of $L_K(F)$ such that $Q_v\cong_{\gr} P_v$ by the restriction of the isomorphism $L_K(F)\cong_{\gr}\bigoplus_{v\in E^0}P_v.$ Hence, there are mutually orthogonal homogeneous idempotents $p_v$ in $L_K(F)$ with their sum equal to the identity element of $L_K(F)$ such that $p_vL_K(F)=Q_v\cong_{\gr} P_v.$ So, \[[p_vL_K(F)]=f([vL_K(E)])\] holds in $K_0^\Gamma(L_K(F)).$   

{\bf Defining $\mathbf{p_e}$ for $\mathbf{e\in E^1}.$} Next, we define elements $p_e$ such that $[p_e]=f([ee^*])$ for every $e\in E^1.$ We first do that for edges with regular sources. So, let $v$ be a regular vertex. In this case, we have that $[p_vL_K(F)]=f([vL_K(E)])=\sum_{e\in \so^{-1}(v)}f([ee^*L_K(E)]).$ Thus, there are finitely generated graded projective right $L_K(F)$-modules $P_e$ such that  
\[[p_vL_K(E)]=[\bigoplus_{e\in \so^{-1}(v)} P_e]\] and that $[P_e]=f([ee^*L_K(E)]).$ By Lemma \ref{lemma_cancellativity}, $p_vL_K(F)\cong_{\gr} \bigoplus_{e\in \so^{-1}(v)} P_e.$  
Let $Q_e$ be a direct summand of $p_vL_K(F)$ 
such that $Q_e\cong_{\gr} P_e$ by the restriction of the isomorphism $p_vL_K(F)\cong_{\gr}\bigoplus_{e\in \so^{-1}(v)} P_e.$
Thus, there are mutually orthogonal homogeneous idempotents $p_e\in p_vL_K(F)_0p_v$ such that $p_eL_K(F)=Q_e\cong_{\gr}P_e.$ Hence, \[[p_eL_K(F)]=f([ee^*L_K(E)])\] holds in $K_0^\Gamma(L_K(F)).$  

If $v$ is an infinite emitter, we aim to define homogeneous idempotents $p_e$ and $p_Z^v$ to be the images of $ee^*$ and $v-\sum_{e\in Z}ee^*$ respectively for $e\in \so^{-1}(v)$ and finite and nonempty set $Z\subseteq \so^{-1}(v).$ Let us index the elements of $\so^{-1}(v)$ so that it is equal to $\{e_0, e_1,\ldots\}$ and let $Z_n=\{e_0,\ldots, e_n\}.$ 

For $e=e_0,$ $ee^*v=vee^*=ee^*$ so $ee^*\leq v$ implying that $v-ee^*$ is also a homogeneous idempotent and that $ee^*L_K(E)$ is a direct summand of $vL_K(E)$ with the complement $(v-ee^*)L_K(E)$. Hence, there are finitely generated graded projective right $L_K(F)$-modules $P_e$ and $P_e^v$ such that $[P_e]+[P_e^v]=[p_vL_K(F)].$ Using Lemma \ref{lemma_cancellativity} again, we obtain orthogonal homogeneous idempotents $p_e$ and $p_{\{e\}}^v$ in $p_vL_K(F)p_v$ such that $p_e+p_{\{e\}}^v=p_v.$  Repeating the same argument for $f=e_1$ and for $v-ee^*$ in the place of $v$ (which works since $v-ee^*=ff^*+(v-ee^*-ff^*)$), we obtain orthogonal homogeneous idempotents $p_f$ and $p_{\{e,f\}}^v$ which add up to $p_{\{e\}}^v$ and such that $p_e$ is orthogonal to $p_f.$ Let $p_{\{f\}}^v$ be $p_e+p_{\{e,f\}}^v$ so that we have that $p_v=p_e+p_{\{e\}}^v=p_e+p_f+p_{\{e,f\}}^v=p_f+p_{\{f\}}^v$ and that $p_{\{e\}}^v=p_v-p_e=p_f+p^v_{\{f\}}-p_e=p_f+p_e+p_{\{e,f\}}^v-p_e=p_f+p_{\{e,f\}}^v.$ 

Continuing this argument inductively, we obtain
orthogonal idempotents $p_{e_n}$ and $p_{Z_n}^v$ with $p^v_{Z_{n-1}}$ as their sum. This ensures that $p_{e_n}$ is orthogonal to $p_{e_i}$ for all $i=0, \ldots, n-1.$ Since $p^v_{Z_{n-1}}\leq p_v$ and $p_{e_n}\leq p^v_{Z_{n-1}},$ we have that $p_{e_n}\leq p_v$ so $p_{e_n}p_v=p_vp_{e_n}=p_{e_n}.$  
If $Z\subseteq Z_n$ contains $e_n,$ we define $p_Z^v$ to be $p^v_{Z_n}+\sum_{e\in Z_n-Z} p_e.$ Hence,   
\[p_Z^v+\sum_{e\in Z}p_e=p^v_{Z_n}+\sum_{e\in Z_n-Z}p_e+\sum_{e\in Z}p_e=p_{Z_n}^v+\sum_{e\in Z_n}p_e=p_{Z_{n-1}}^v-p_{e_n}+\sum_{e\in Z_n}p_e=p_{Z_{n-1}}^v+\sum_{e\in Z_{n-1}}p_e=p_v\]
and if $W$ is such that $Z\subsetneq W\subseteq Z_n$ also contains $e_n,$ then
\[p^v_Z=p^v_{Z_n}+\sum_{e\in Z_n-Z}p_e=
p^v_{Z_n}+\sum_{e\in Z_n-W}p_e+\sum_{e\in W-Z}p_e=p_W^v+\sum_{e\in W-Z}p_e.\]
  
{\bf Defining $\mathbf{x_e}$ and $\mathbf{y_e}$ for $\mathbf{e\in E^1}.$} Note that $e$ and $e^*$ realize the equivalence $ee^*\sima_{\gr} e^*e$  for $e\in E^1.$ So, $ee^*L_K(E)\cong_{\gr} (-1)e^*eL_K(E)$ by Lemma \ref{lemma_graded_equivalence}. Hence, in $K_0^\Gamma(L_K(F)),$ we have that \[[p_eL_K(F)]=f([ee^*L_K(E)])=f([(-1)e^*eL_K(E)])=f(t[\ra(e)L_K(E)])=\]\[tf([\ra(e)L_K(E)])=t[p_{\ra(e)}L_K(F)]=[(-1)p_{\ra(e)}L_K(F)]\] for every $e\in E^1.$ 
By Lemma \ref{lemma_cancellativity}, this implies that $p_eL_K(F)\cong_{\gr}(-1)p_{\ra(e)}L_K(F).$ By Lemma \ref{lemma_graded_equivalence}, there are  $x_e\in p_eL_K(F)_1p_{\ra(e)}$ and $y_e\in p_{\ra(e)}L_K(F)_{-1}p_e$ such that $x_ey_e=p_e$ and $y_ex_e=p_{\ra(e)}.$

{\bf Checking the axioms.} We check that 
the elements $p_v, x_e, y_e$ for $v\in E^0, e\in E^1$ form an $E$-family. 
By construction of $p_v$ for $v\in E^0$, the axiom (V) holds. 

The relations $x_e\in p_eL_K(F)_1p_{\ra(e)}, y_e\in p_{\ra(e)}L_K(F)_{-1}p_e$ imply that 
$x_ep_{\ra(e)}=x_e$ and $p_{\ra(e)}y_e=y_e.$ In addition, if $v=\so(e)$ is regular, then 
$p_vx_e=\sum_{f\in \so^{-1}(v)} p_f x_e=\sum_{f\in \so^{-1}(v)} p_f p_ex_e=p_ex_e=x_e$ and, similarly,
$y_ep_v=y_e\sum_{f\in \so^{-1}(v)} p_f=y_ep_e\sum_{f\in \so^{-1}(v)} p_f=y_ep_e=y_e.$ If $v=\so(e)$ is an infinite emitter, then $p_vx_e=p_v(p_ex_e)=(p_vp_e)x_e=p_ex_e=x_e$ and,  similarly,
$y_ep_v=(y_ep_e)p_v=y_e(p_ep_v)=y_ep_e=y_e.$ 
Thus, both (E1) and (E2) hold. 

To check that the axiom (CK1) holds, recall that that $y_ex_e=p_{\ra(e)}.$ If $e\neq f$ are edges with the same source, then $y_ex_f=(y_ep_e)(p_fx_f)=y_e(p_ep_f)x_f=0$ 
since $p_ep_f=0$ by the definition of the elements $p_e$ and $p_f.$ If $e\neq f$ and $\so(e)=v\neq w=\so(f),$ then $y_ex_f=(y_ep_v)(p_wx_f)=y_e(p_vp_w)x_f=0$
since $p_vp_w=0.$ This shows that (CK1) holds. 

The axiom (CK2) holds since $\sum_{e\in \so^{-1}(v)} x_ey_e=\sum_{e\in \so^{-1}(v)} p_e=p_v$ if $v$ is a regular vertex. 

{\bf The final step.}
By the Universal Property of $L_K(E)$ (see section \ref{subsection_LPAs}), there is a unique algebra homomorphism $\phi: L_K(E)\to L_K(F)$ which maps $v\in E^0$ to $p_v,$ $e\in E^1$ to $x_e,$ and $e^*$ to $y_e.$ Since $p_v$ is in $L_K(F)_0,$ $x_e$ is in $L_K(F)_1,$ and $y_e$ is in $L_K(F)_{-1},$ the homomorphism $\phi$ is graded. 
Note also that $\phi$ is unital since \[\phi(1_{L_K(E)})=\phi(\sum_{v\in E^0} v)=\sum_{v\in E^0}p_v=1_{L_K(F)}.\]

As $\phi(v)=p_v$ for any $v$ and, if $v$ is an infinite emitter, $\phi(v-\sum_{e\in Z}ee^*)=p_Z^v,$ $K_0^\Gamma(\phi)$ and $f$ are equal on the generators of $K_0^\Gamma(L_K(E)).$ Hence, $K_0^\Gamma(\phi)=f.$

If $f$ is injective, the condition $[vL_K(E)]\neq 0$ ensures that $[p_vL_K(F)]\neq 0$ which implies that $p_vL_K(F)\neq 0.$ Thus, $\phi(v)=p_v\neq 0$ for every $v\in E^0.$ By the Graded Uniqueness Theorem (see section \ref{subsection_LPAs}), $\phi$ is injective. 
\end{proof}

Theorem \ref{theorem_fullness} has the following corollary. 

\begin{corollary} 
If $E$ and $F$ are finite graphs and $f:(K_0^\Gamma(L_K(E)), [L_K(E)])\to (K_0^\Gamma(L_K(F)),$ $[L_K(F)])$ is an isomorphism of $\POG^u_\Gamma,$ then there are unital graded algebra monomorphisms $\phi: L_K(E)\to L_K(F)$ and $\psi: L_K(F)\to L_K(E)$ such that $K_0^\Gamma(\phi)=f$ and $K_0^\Gamma(\psi)=f^{-1}.$ 
\label{corollary_of_fullness}
\end{corollary}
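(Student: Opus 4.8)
The plan is to apply Theorem \ref{theorem_fullness} twice, once to $f$ and once to its inverse $f^{-1}$, invoking the injectivity clause of that theorem in each application. Since $E$ and $F$ are finite graphs, each has finitely many vertices and finitely many---hence countably many---edges, so both graphs satisfy the hypotheses of Theorem \ref{theorem_fullness} and we may freely interchange their roles.

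First I would note that an isomorphism $f$ in $\POG^u_\Gamma$ is in particular an injective $\POG^u_\Gamma$-morphism. Applying the ``moreover'' part of Theorem \ref{theorem_fullness} to $f$ then yields a unital graded algebra monomorphism $\phi\colon L_K(E)\to L_K(F)$ with $K_0^\Gamma(\phi)=f$. Next I would observe that, being an isomorphism in $\POG^u_\Gamma$, $f$ has a two-sided inverse $f^{-1}\colon (K_0^\Gamma(L_K(F)),[L_K(F)])\to (K_0^\Gamma(L_K(E)),[L_K(E)])$ which is itself a morphism of $\POG^u_\Gamma$; in particular $f^{-1}$ is order-preserving, order-unit-preserving (since $f([L_K(E)])=[L_K(F)]$ forces $f^{-1}([L_K(F)])=[L_K(E)]$), and injective. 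Applying Theorem \ref{theorem_fullness} a second time---now with the roles of $E$ and $F$ reversed---to the injective morphism $f^{-1}$ produces a unital graded algebra monomorphism $\psi\colon L_K(F)\to L_K(E)$ with $K_0^\Gamma(\psi)=f^{-1}$, completing the construction.

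Since the whole argument is a direct double invocation of the already-established Theorem \ref{theorem_fullness}, I anticipate no genuine obstacle. The only points deserving a line of care are the verifications just mentioned: that a finite graph meets the countable-edge hypothesis, that an isomorphism of pointed pre-ordered $\Gamma$-groups is injective with an inverse that again lies in $\POG^u_\Gamma$, and that order-unit-preservation passes to the inverse. None of these requires new ideas beyond the functoriality recorded in section \ref{subsection_graded_Grothendieck}.

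Finally, I would emphasize what the statement does \emph{not} claim: I would not expect $\phi$ and $\psi$ to be mutually inverse as algebra maps, but only that their images $K_0^\Gamma(\phi)=f$ and $K_0^\Gamma(\psi)=f^{-1}$ are mutually inverse on the $K_0^\Gamma$-level. This is exactly the content of the corollary and all that fullness alone can guarantee; the discrepancy between genuine inverse isomorphisms of algebras and maps that merely invert one another after applying $K_0^\Gamma$ is precisely the failure of faithfulness taken up later in the paper.
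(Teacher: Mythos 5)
Your proposal is correct and matches the paper's (implicit) argument exactly: the corollary is stated as an immediate consequence of Theorem \ref{theorem_fullness}, obtained by applying its injectivity clause to $f$ and to $f^{-1}$, with the routine checks that finite graphs satisfy the hypotheses and that the inverse of a $\POG^u_\Gamma$-isomorphism is again an injective $\POG^u_\Gamma$-morphism. Your closing remark that $\phi$ and $\psi$ need not be mutually inverse algebra maps is also the correct reading of the statement.
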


\section{Weak faithfulness}\label{section_faithfullness}

Theorem \ref{theorem_faithfulness} specifies the level of faithfulness of the functor $K_0^\Gamma.$ Using vertices and edges of the graph instead of the standard matrix units, the proof generalizes the standard argument for faithfulness of $K_0$ for matricial algebras over a field when the homomorphisms are considered modulo inner automorphisms. 

\begin{theorem} (Weak faithfulness)
If $E$ and $F$ are any graphs and if $\phi, \psi: L_K(E)\to L_K(F)$ are graded algebra homomorphisms (not necessarily unital), then the following are equivalent. 
\begin{enumerate}[\upshape(1)]
\item $K^\Gamma_0(\phi)=K^\Gamma_0(\psi).$

\item The relations $\phi(v)\sima \psi(v)$ and $\phi(ee^*)\sima \psi(ee^*)$ hold in $L_K(F)_0$ for every $v\in E^0$ and every $e\in E^1.$    
\end{enumerate}

If $E$ and $F$ are finite, conditions (1) and (2) are equivalent with each of the following conditions. 
\begin{enumerate}
\item[{\em (3)}] The relations $\phi(v)\sima \psi(v)$ hold in $L_K(F)_0$ for every $v\in E^0.$   
\item[{\em (4)}] There exist elements $x,y\in L_K(F)_0$ such that $xy=\phi(1_{L_K(E)}),$ $yx=\psi(1_{L_K(E)}),$  $x\psi(v)y=\phi(v)$ for every $v\in E^0$. 
\item[{\em (4$^+$)}] There exist elements $x,y\in L_K(F)_0$ such that $xy=\phi(1_{L_K(E)}),$ $yx=\psi(1_{L_K(E)}),$  $x\psi(v)y=\phi(v)$ for every $v\in E^0$ and $x\psi(ee^*)y=\phi(ee^*)$ for every $e\in E^1$. 
\item[{\em (5$^+$)}] There is an invertible element $z$ of $L_K(F)_0$ such that $z\psi(v)z^{-1}=\phi(v)$ for every $v\in E^0$ and $z\psi(ee^*)z^{-1}=\phi(ee^*)$ for every $e\in E^1.$

\item[{\em (5)}] There is an invertible element $z$ of $L_K(F)_0$ such that $z\psi(v)z^{-1}=\phi(v)$ for every $v\in E^0.$       
\end{enumerate}
\label{theorem_faithfulness} 
\end{theorem}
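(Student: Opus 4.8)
The plan is to first settle the equivalence of (1) and (2) for arbitrary graphs, and then, under finiteness, to run a cycle of implications through (3)--(5$^+$) whose only substantial link is the passage from the equivalence data in (2) to a genuine invertible conjugator in (5$^+$). For (1) $\iff$ (2) I would use that for a graded homomorphism $\phi$ and a homogeneous idempotent $p\in L_K(E)_0$ one has $K_0^\Gamma(\phi)([pL_K(E)])=[\phi(p)L_K(F)]$, together with the fact (Lemma \ref{lemma_cancellativity} and Lemma \ref{lemma_graded_equivalence} with trivial shift) that, for degree-zero idempotents, $[\phi(p)L_K(F)]=[\psi(p)L_K(F)]$ holds in $K_0^\Gamma(L_K(F))$ if and only if $\phi(p)\sima\psi(p)$ in $L_K(F)_0$. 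Since $K_0^\Gamma(L_K(E))$ is generated as a $\Zset[\Gamma]$-module by the classes $[v]$ and $[v-\sum_{e\in Z}ee^*]$ coming from the generators of $M_E^\Gamma$, and since $\phi,\psi$ respect the orthogonal decompositions of $v-\sum_{e\in Z}ee^*$, the two induced maps coincide exactly when they agree on these generators, i.e. exactly when $[\phi(v)]=[\psi(v)]$ and $[\phi(ee^*)]=[\psi(ee^*)]$ for the relevant $v$ and $e$; translating through the displayed equivalence yields (2). (For a regular source the class $[ee^*L_K(E)]$ already equals $t[\ra(e)L_K(E)]$, so it is controlled by the vertex data, but recording all edges in (2) is harmless.)

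Now assume $E$ and $F$ finite, so $F$ has no infinite emitters and $L_K(F)_0$ is a unital ultramatricial algebra with unital connecting maps (as recalled in section \ref{subsection_LPAs}). The implications among (3)--(5$^+$) I would arrange into a cycle, most links being routine: (5$^+$) $\Rightarrow$ (4$^+$) by taking $x=\phi(1)z\psi(1)$ and $y=\psi(1)z^{-1}\phi(1)$; (4$^+$) $\Rightarrow$ (4) and (5$^+$) $\Rightarrow$ (5) by forgetting the edge conditions; (4) $\Rightarrow$ (3) by checking that $a=x\psi(v)$ and $b=\psi(v)y$ realize $\phi(v)\sima\psi(v)$; (5) $\Rightarrow$ (3) similarly with $a=z\psi(v)$, $b=\psi(v)z^{-1}$; and (3) $\Rightarrow$ (1) because for finite $E$ the group $K_0^\Gamma(L_K(E))$ is generated by the vertex classes alone, so agreement on them forces $K_0^\Gamma(\phi)=K_0^\Gamma(\psi)$. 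Combined with (1) $\iff$ (2), closing the cycle by (2) $\Rightarrow$ (5$^+$) makes all seven conditions equivalent.

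The crux is thus (2) $\Rightarrow$ (5$^+$), the analogue of the classical faithfulness of $K_0$ for matricial algebras modulo inner automorphisms, and the idea is to conjugate the finest available orthogonal system. The set $\{ee^*\mid e\in E^1\}\cup\{v\mid v\text{ a sink}\}$ consists of pairwise orthogonal idempotents summing to $1_{L_K(E)}$, so its images under $\phi$ and $\psi$ are orthogonal decompositions of $\phi(1)$ and $\psi(1)$. By (2) each corresponding pair is equivalent in $L_K(F)_0$, and using condition (4) of Lemma \ref{lemma_graded_equivalence} I may choose the realizing elements inside the appropriate corners; summing them produces $z_0,w_0\in L_K(F)_0$ with $z_0w_0=\phi(1)$, $w_0z_0=\psi(1)$, and $z_0\psi(r)w_0=\phi(r)$ for each member $r$ of the system. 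It remains to upgrade $z_0$ to an honest unit of $L_K(F)_0$, and this is where the ultramatricial structure is essential: all the relevant idempotents and realizing elements lie in a single finite-dimensional subalgebra $B\cong\prod_k M_{n_k}(K)$ containing $1_{L_K(F)}$ (unital connecting maps), where equivalence of idempotents is detected by ranks, so $\phi(1)\sima_B\psi(1)$ forces $1-\phi(1)\sima 1-\psi(1)$. Fixing realizing elements $c,d$ for this complementary equivalence and setting $z=z_0+c$, $w=w_0+d$, the corner orthogonalities kill the cross terms, giving $zw=wz=1_{L_K(F)}$, so $z$ is invertible with $z\psi(r)z^{-1}=\phi(r)$ for every $r$; since every edge occurs as some $ee^*$ and every vertex is either a sink or a sum $\sum_e ee^*$ of such idempotents, this delivers (5$^+$). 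The main obstacle is exactly this complement-matching step: obtaining $1-\phi(1)\sima 1-\psi(1)$ relies on the unitality of the connecting maps of $L_K(F)_0$, and hence on the finiteness of $F$, which cannot be dropped.
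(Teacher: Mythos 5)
Your proposal is correct and follows essentially the same route as the paper: both directions of (1)$\Leftrightarrow$(2) go through Lemmas \ref{lemma_cancellativity} and \ref{lemma_graded_equivalence}, the crux is handled by summing corner realizers over the orthogonal system $\{ee^*\mid e\in E^1\}\cup\{v\mid v\ \text{a sink}\}$ and then matching the complementary idempotents $1_F-\phi(1_E)\sim 1_F-\psi(1_E)$ using the ultramatricial structure of $L_K(F)_0$, and the remaining implications are the same routine corner computations. The only cosmetic differences are that the paper factors the crux as (2)$\Rightarrow$(4$^+$)$\Rightarrow$(5$^+$) rather than proving (2)$\Rightarrow$(5$^+$) in one step, and that it justifies the complement-matching by unit-regularity of $L_K(F)_0$ via \cite[Theorem 4.1]{Goodearl_book}, where you instead perform the equivalent rank computation inside a finite-dimensional subalgebra with the same underlying hypothesis (unital connecting maps, hence finiteness of $F$).
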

\begin{proof} 
For brevity, let $L_K(F)=S.$

Having Lemma \ref{lemma_cancellativity}, the implication (1) $\Rightarrow$ (2) is rather direct. If (1) holds and $v\in E^0,$ then 
\[[\phi(v)L_K(F)]=K^\Gamma_0(\phi)([vL_K(E)])=K^\Gamma_0(\psi)([vL_K(E)])=[\psi(v)L_K(F)],\] so that
$\phi(v)L_K(F)\cong_{\gr}\psi(v)L_K(F)$
holds by Lemma \ref{lemma_cancellativity}. Thus, $\phi(v)\sima_{S_0} \psi(v)$ holds by Lemma \ref{lemma_graded_equivalence}. Similarly, for every $e\in E^1,$
$[\phi(ee^*)L_K(F)]=K^\Gamma_0(\phi)([ee^*L_K(E)])=K^\Gamma_0(\psi)([ee^*L_K(E)])=[\psi(ee^*)L_K(F)]$ which implies $\phi(ee^*)L_K(F)\cong_{\gr}\psi(ee^*)L_K(F)$ by Lemma \ref{lemma_cancellativity} so that  
$\phi(ee^*)\sima_{S_0} \psi(ee^*)$ holds by Lemma \ref{lemma_graded_equivalence}. This shows that (2) holds. 

The converse (2) $\Rightarrow$ (1) holds since (2) implies that the maps $K^\Gamma_0(\phi)$ and $K^\Gamma_0(\psi)$ are equal on the generators of $K_0^\Gamma(L_K(E)).$ 

Let us assume that $E$ and $F$ are finite  graphs now. Condition (2) trivially implies (3) and (3) implies (1) by the same argument as for (2) $\Rightarrow$ (1).   
To finish the proof, we show that $(2) \Rightarrow (4^+)\Rightarrow (5^+)\Rightarrow (5)\Rightarrow (3).$ The proof of $(4^+)\Rightarrow (5^+)$  also shows that (4) $\Rightarrow$ (5), so this also shows that $(2) \Rightarrow (4)\Rightarrow (5)\Rightarrow (3).$  

Assume that (2) holds. For every $e\in E^1,$ let $x_e\in \phi(ee^*)S_0\psi(ee^*)$ and $ y_e\in \psi(ee^*)S_0\phi(ee^*)$ be such that $x_ey_e=\phi(ee^*)$ and $ y_ex_e=\psi(ee^*).$
If $e\neq f,$ $y_ex_f=y_e\phi(ee^*)\phi(ff^*)x_f=y_e\phi(ee^*ff^*)x_f=y_e0x_f=0.$ Similarly, $x_ey_f=0.$

For $v\in E^0$ regular, let $x_v=\sum_{e\in \so^{-1}(v)} x_e$ and $y_v=\sum_{e\in \so^{-1}(v)} y_e.$ We claim that $x_v$ and $y_v$ realize the equivalence $\phi(v)\sima_{S_0}\psi(v).$ Indeed,
\[x_vy_v=\sum_{e\in \so^{-1}(v)}\sum_{f\in \so^{-1}(v)} x_e y_f=\sum_{e\in \so^{-1}(v)} x_ey_e=\sum_{e\in \so^{-1}(v)} \phi(ee^*)=\phi(\sum_{e\in \so^{-1}(v)}ee^*)= \phi(v).\] Analogously, $y_vx_v=\psi(v).$ Note that $\phi(v)x_v=\sum_{e\in \so^{-1}(v)}\phi(ee^*)\sum_{f
\in \so^{-1}(v)}x_f=\sum_{e\in \so^{-1}(v)}\phi(ee^*)x_e=\sum_{e\in \so^{-1}(v)}x_e=x_v.$ By analogous arguments, $x_v\in S_0\psi(v)$ and $y_v\in \psi(v)S_0\phi(v).$

If $v$ is a sink, we let $x_v\in \phi(v)S_0\psi(v)$ and $y_v\in \psi(v)S_0\phi(v)$ be any two elements realizing the equivalence $\phi(v)\sima_{S_0} \psi(v).$ 

If $v$ and $w$ are two different vertices of $E,$ we have that 
\[x_vy_w=x_v\psi(v)\psi(w)y_v=x_v\psi(vw)y_v=x_v0y_v=0.\] Similarly, $y_vx_w=0.$

Let $x=\sum_{v\in E^0}x_v$ and $y=\sum_{v\in E_0}y_v.$ Then 
\[
xy = \sum_{v\in E^0} x_v\sum_{w\in E^0} y_w= \sum_{v\in E^0}x_vy_v= \sum_{v\in E^0} \phi(v)
=\phi(\sum_{v\in E^0}v)=\phi(1_{L_K(E)})
\]
and, similarly, $yx=\psi(1_{L_K(E)}).$ 
 
For $v\in E^0,$ we have that 
\[
x\psi(v)y = 
\sum_{w\in E^0}x_w \psi(v) \sum_{w\in E^0}y_w=x_v\psi(v)y_v=x_vy_v=\phi(v).\]

For any $e\in E^1,$ we have that 
\[
x\psi(ee^*)y = 
\sum_{w\in E^0}x_w \psi(ee^*) \sum_{w\in E^0}y_w=x_{\so(e)}\psi(ee^*)y_{\so(e)}=\]\[
\sum_{f\in \so^{-1}(\so(e)))}x_f \psi(ee^*) \sum_{f\in \so^{-1}(\so(e)))}y_f=x_e\psi(ee^*)y_e=\phi(ee^*).\]
This shows that (4$^+$) holds.

Assume that (4$^+$) holds and let us show (5$^+$). For brevity, let $1_E=1_{L_K(E)}$ and $1_F=1_{L_K(F)}.$ By replacing $x$ with $\phi(1_E)x\psi(1_E)$ and $y$ with $\psi(1_E)y\phi(1_E),$ we can assume that $x\in \phi(1_E)S_0\psi(1_E)$ and $y\in \psi(1_E)S_0\phi(1_E)$ realize the equivalence $\phi(1_E)\sima_{S_0}\psi(1_E).$ Since $F$ is a finite graph, $S_0$ is isomorphic to an ultramatricial algebra over $K$ with unital connecting maps (\cite[Corollary 2.1.16]{LPA_book}). Hence, $S_0$ is a unit-regular ring. By \cite[Theorem 4.1]{Goodearl_book}, if two idempotents $p$ and $q$ of a unit-regular ring are algebraically equivalent then $1-p$ and $1-q$ are also algebraically equivalent. 
Thus, the equivalence $\phi(1_E)\sima_{S_0}\psi(1_E)$ implies that $1_F-\phi(1_E)\sima_{S_0}1_F-\psi(1_E).$ %
If $x'\in (1_F-\phi(1_E))S_0(1_F-\psi(1_E))$ and $y'\in (1_F-\psi(1_E))S_0(1_F-\phi(1_E))$ are some elements realizing this equivalence, one checks that $z=x+x'$ and $w=y+y'$ are mutually inverse elements such that $z\psi(v)w =\phi(v)$ for every $v\in E^0,$ and $z\psi(ee^*)w=\phi(ee^*)$ for every $e\in E^1.$

(5$^+$) trivially implies (5) and (5) $\Rightarrow$ (3) holds by a direct argument: 
if (5) holds and $z\in S_0$ is as in condition (5), then $x_v=z\psi(v)$ and $y_v=z^{-1}$
realize the algebraic equivalence $\phi(v)\sima_{S_0}\psi(v).$ 
\end{proof}

In the case that $E=F$ and $E$ is finite, $\phi$ is a graded automorphism, and $\psi$ is the identity, \cite[Proposition 6.3]{Ara_Pardo_graded_K_classification} states that condition (1) of Theorem \ref{theorem_faithfulness} is equivalent with the requirement that there is an inner automorphism $\theta$ of $L_K(E)_0$ such that $\phi$ and $\theta$ coincide on a nonempty and finite subset of $L_K(E)_0.$ 

If $E$ and $F$ are graphs without sources and if the maps $\phi$ and $\psi$ as in the assumption of Theorem \ref{theorem_faithfulness} are also graded {\em isomorphisms} such that condition (1) of Theorem \ref{theorem_faithfulness} holds, then \cite[Theorem 6.5]{Ara_Pardo_graded_K_classification} exhibits an 
injective graded endomorphism $\theta$ of $L_K(F)$ such that the restriction of $\theta$ to $F^0$ is the conjugation by an invertible element of $L_K(F)_0$ and such that $\theta\psi=\phi.$ A necessary and sufficient condition for such $\theta$ to be onto is also given. 

Next, we exhibit an example showing that one cannot strengthen condition (5) of Theorem \ref{theorem_faithfulness} by requiring that $\phi(e)=z\psi(e)z^{-1}$ holds for all edges $e$ of $E$. This example also implies that $K_0^\Gamma$ is not faithful on $\LE$ (note that this also follows from \cite[Example 6.7]{Ara_Pardo_graded_K_classification}).

\begin{example}
Let $E$ be the graph $\xymatrix{\bullet^u\ar@(ul,ur)^e\ar@(dl, dr)_f  \ar[r]^g&\bullet^v  },$ let $\phi$ be the identity map on $L_K(E)$, and let $\psi$ map the elements $u,v, g$ and $g^*$ identically to themselves and map $e$ to $f,$ $f$ to $e,$ $e^*$ to $f^*,$ and $f^*$ to $e^*.$ One checks that the elements of the image of $\psi$ constitute an $E$-family, so $\psi$ extends to a ring homomorphism which we also call $\psi$ and which is graded since the degrees of the $E$-family are adequate. The maps $\phi$ and $\psi$ satisfy condition (3) of Theorem \ref{theorem_faithfulness}, so they induce the same map on $K_0^\Gamma(L_K(E)).$ The unitary and selfadjoint element $w=ef^*+fe^*+gg^*+v$ is such that conditions (5) and (5$^+$) hold. Note that $we\neq fw$ (assuming that $we=fw$ implies $e=f$ which is a contradiction).

We show that there is no invertible $z\in L_K(E)_0$ such that $\theta\psi=\phi$ where $\theta$ is the conjugation by $z.$  Assume, on the contrary, that there is an invertible element $z\in L_K(E)_0$ such that $z\psi(r)z^{-1}=r$ holds for every $r\in L_K(E).$ Let 
\[z=kv+k_ggg^*+\sum_{i=1}^nk_ip_igg^*q_i^*+\sum_{j=1}^m k_j r_js_j^*\]
where the paths $p_i, q_i, r_j,$ and $s_j$ consist only of edges $e$ and $f,$ $|p_i|=|q_i|>0,$ $|r_j|=|s_j|>0,$ and $k, k_g, k_i, k_j\in K$ for every $i=1,\ldots, n$ and every $j=1,\ldots, m.$ Note that $uz=z-kv.$

Let $p$ be any path consisting only of edges $e$ and $f$ which has length larger than any of $p_i, q_j, r_j,$ and $s_j.$ Let $q=\psi(p).$ 
Since $pz=zq,$ we have that $p^*pz=p^*zq,$ so $z-kv=uz=p^*zq.$ As $p^*v=p^*g=p^*p_ig=0$ for any $i=1,\ldots, n,$ we have that   
\[z-kv=\sum_{j=1}^m k_jp^*r_js_j^*q.\]
We claim that the sum on the right side is zero. 
If $p^*r_js_j^*q\neq 0,$ then $r_j$ is a prefix of $p,$ necessarily proper by the choice of $p,$ and $s_j$ is a proper prefix of $q.$ If $p=r_jp'$ and $q=s_jq'$ for some paths $p'$ and $q',$ then $q'=\psi(p')$ (also $s_j=\psi(r_j)$). Thus,  
$p^*r_js_j^*q=(p')^*r_j^*r_js_j^*s_j\psi(p')=(p')^*\psi(p')=0$ since $p'$ and $\psi(p')$ have the first edge different. This contradicts the assumption that $p^*r_js_j^*q\neq 0.$ So, we have that $z-kv=0.$ This is a contradiction since $kv$ is not invertible ($kvu=0,$ for example).
\label{example_not_faithful} 
\end{example}

\section{Some reflections on the results}
\label{section_reflections}

The progress made by Theorem \ref{theorem_fullness} and Corollary \ref{corollary_of_fullness} does not settle the question whether the GCC holds. We list some other questions and thoughts.  

\subsection{Removing the cardinality assumptions}
Let $E$ and $F$ have infinitely many vertices
and the standard generating intervals of $L_K(E)$ and $L_K(F)$ be considered instead of the standard order-units. If the assumption of Theorem \ref{theorem_fullness} is modified accordingly, the question is whether the conclusion of Theorem \ref{theorem_fullness} holds without the requirement that the graded algebra map is unital. 

\subsection{The level of constructiveness of Theorem \ref{theorem_fullness}}\label{subsection_constructiveness}
Another natural question is whether one can  explicitly produce a graded homomorphism $\phi$ as in Theorem \ref{theorem_fullness}. The proof of Theorem \ref{theorem_fullness} has several non-constructive steps present mainly because
a relation $[P]=[Q]$ on the $K_0^\Gamma$-level does not produce a specific graded isomorphism $P\cong_{\gr} Q$ on the algebra level. Such isomorphism would be needed if one is to explicitly produce $p_v$ for a module $P_v$ in the first step of the proof of Theorem \ref{theorem_fullness}. 
The same non-constructive step is present in the subsequent stage of the proof when defining $p_e.$ Similarly, when defining $x_e$ and $y_e,$ one would need to know a specific graded isomorphism $p_eL_K(F)\cong_{\gr}(-1)p_{\ra(e)}L_K(F)$ in order to come up with the elements $x_e$ and $y_e$ realizing the equivalence $p_e\sima_{\gr} p_{\ra(e)}$. So, one can say that the proof of Theorem \ref{theorem_fullness} is not constructive.

\subsection{A relation of the GCC with symbolic dynamics}
Let $E$ and $F$ be finite graphs without sinks, let $A_E$ and $A_F$ be their incidence matrices, and let $G_E^\Gamma$ and $G_F^\Gamma$ be the graph $\Gamma$-groups of $E$ and $F$ respectively. 
Recall that $L_K(E)$ and $L_K(F)$ are graded Morita equivalent if there is an equivalence of the categories of graded right modules of $L_K(E)$ and $L_K(F)$ which commutes with the shift functor $\tau_n$ (given by $\tau_n(M)=(n)M$ for all $n\in \Zset).$
If the shift equivalence and the strong shift equivalence in the diagram below refer to such equivalences over $\Zset^+,$ then the two implications in the second row of the diagram below hold by \cite[Proposition 15]{Roozbeh_Dynamics} (the first also by \cite[Theorem 3.12]{Ara_Pardo_graded_K_classification}). The other three implications in the diagram are direct and the equivalence in the second row holds by \cite[Corollary 12]{Roozbeh_Dynamics} (if $E$ and $F$ have no sources, also by \cite[Theorem 3.10]{Ara_Pardo_graded_K_classification}). 

{\tiny
\begin{center}
\begin{tabular}{ccccccc}
 & &
\begin{tabular}{|c|}\hline
$L_K(E)\cong_{\gr}L_K(F)$ \\ \hline 
\end{tabular}& $\Rightarrow$ & 
\begin{tabular}{|c|}\hline
$(G^\Gamma_E, 1_E)\cong (G^\Gamma_F, 1_F)$
\\ \hline 
\end{tabular} & &
\\
&&$\Downarrow$&&$\Downarrow$&&\\
\begin{tabular}{|c|}\hline
$A_E$ and $A_F$ are\\ 
strongly shift equiv. +\\  $E$ and $F$ have no sources.\\ \hline
\end{tabular} & $\Rightarrow$ &
\begin{tabular}{|c|}\hline
$L_K(E)$ and $L_K(F)$ are \\
graded Morita equivalent.\\ \hline
\end{tabular}
& $\Rightarrow$ & 
\begin{tabular}{|c|}\hline
$G^\Gamma_E\cong G^\Gamma_F$ \\ \hline
\end{tabular} & $\Leftrightarrow$ &\begin{tabular}{|c|}\hline
$A_E$ and $A_F$ are \\
shift equivalent.\\ \hline
\end{tabular}
\end{tabular}
\end{center}}

By \cite[Example 18]{Roozbeh_Dynamics}, there 
is a finite graph $E$ without sources or sinks  such that $A_E$ and $A^t_E$ are strongly shift equivalent. If $F$ is the opposite graph of $E$ (obtained by reversing the edges of $E$), then $A_F=A^t_E.$ 
However, by the same example, no order-preserving map $G_E^\Gamma\to G_F^\Gamma$ can also be order-unit-preserving. In particular, this shows that $L_K(E)$ and $L_K(F)$ are not  graded isomorphic. Hence, the vertical implications are strict. 

By the Kim-Roush example from \cite{Kim_Roush}, there are $7\times 7$ matrices $A$ and $B$ with their entries in $\Zset^+$ such that $A$ and $B$ are shift equivalent but not strongly shift equivalent. If $E$ and $F$ are graphs such that their incidence matrices are $A$ and $B$ respectively, then $E$ and $F$ have no sources or sinks. This shows that the composition of the two implications in the second row is strict. So, at least one of these two implications is strict, but it is not clear which one, possibly both. The statement that the second implication is not strict for all finite graphs is sometimes also referred to as the GCC.

We also note that, by \cite[Theorem A]{Eilers_et_al}, the four conditions in the middle two columns of the above diagram are all equivalent for countable graphs with the property that if there is an edge from a vertex $v$ to a vertex $w,$ then there are infinitely many edges from $v$ to $w$.

\end{document}